\documentclass[12pt]{amsart}
\usepackage[norelsize]{algorithm2e}
\usepackage{amsmath,amssymb,txfonts}
\usepackage{amsfonts}
\usepackage[utf8]{inputenc}
\usepackage[T1]{fontenc}
\usepackage{lmodern}
\usepackage{mathrsfs}
\usepackage{amsthm}
\usepackage{verbatim}
\usepackage{tikz}
\usetikzlibrary{matrix}
\usepackage{caption}

\newcommand{\N}{\mathbb{N}}
\newcommand{\T}{\mathbb{T}}
\newcommand{\D}{\mathbb{D}}
\newcommand{\C}{\mathbb{C}}

\newtheorem{bddmap}{Proposition}[section]
\newtheorem{masslemma}[bddmap]{Lemma}
\newtheorem{normlimit}[bddmap]{Proposition}
\newtheorem{localization}[bddmap]{Lemma}
\newtheorem{localization2}[bddmap]{Lemma}
\newtheorem{isomorphism}[bddmap]{Theorem}
\newtheorem{T_gstrictsing}[bddmap]{Theorem}
\newtheorem{aleman_cima}[bddmap]{Theorem}

\newtheorem{lemma}[bddmap]{Lemma}
\date{\today}
\subjclass[2000]{Primary 47G10; Secondary 30H10.} \keywords{Volterra operator, integral operator, strict singularity, strictly singular, Hardy spaces}
\begin{document}

\author{Santeri Miihkinen}
\address{Santeri Miihkinen:\ Department of Mathematics and Statistics, University of Helsinki, Box 68, 00014 Helsinki, Finland}
         \email{santeri.miihkinen@helsinki.fi}
\title[\resizebox{4.5in}{!}{Strict singularity of a Volterra-type integral operator on $H^p$}]{Strict singularity of a Volterra-type integral operator on $H^p$}

\begin{abstract}
We prove that a Volterra-type integral operator $$T_gf(z) = \int_0^z f(\zeta)g'(\zeta)d\zeta, \quad z \in \D,$$ defined on Hardy spaces $H^p, \, 1 \le p < \infty,$ fixes an isomorphic copy of $\ell^p,$ if the operator $T_g$ is not compact. In particular, this shows that the strict singularity of the operator $T_g$ coincides with the compactness of the operator $T_g$ on spaces $H^p.$ As a consequence, we obtain a new proof for the equivalence of the compactness and the weak compactness of the operator $T_g$ on $H^1$.   
\end{abstract}
\maketitle
\section{Introduction}

 Let $g$ be a fixed analytic function in the open unit disc $\D$ of the complex plane $\C$. We consider a linear integral operator $T_g$ defined formally for analytic functions $f$ in $\D$ by $$T_gf(z) = \int_0^z f(\zeta)g'(\zeta)d\zeta, \quad z \in \D.$$ 
Ch.\ Pommerenke was the first author to consider the boundedness of the operator $T_g$ on Hardy space $H^2$ and he characterized it in \cite{Pommerenke} in a connection to exponentials of $BMOA$ functions. 
A systematic study of the operator $T_g$ was initiated by A.\ Aleman and A.\ G.\ Siskakis in \cite{AS1}, where they stated the boundedness and compactness characterization of $T_g$ on Hardy spaces $H^p, \, 1 \le p < \infty.$ Namely, they observed that $T_g$ is bounded (compact) if and only if $g \in BMOA$ ($g \in VMOA$). The same boundedness characterization of the operator $T_g$ on $H^p, \, 0 < p < 1,$ spaces was obtained by Aleman and J.\ Cima in \cite{AC}. 
Many properties of the operator $T_g$ have been studied by several authors later on and they are well known in most spaces of analytic functions, see also surveys \cite{A2} and \cite{Siskakis}. 

However, one operator theoretically interesting property, the strict singularity, has not been considered in the case of $T_g.$ 
A bounded operator $S\colon X \to Y$ between Banach spaces is strictly singular if its restriction to any infinite-dimensional closed subspace is not an isomorphism onto its image. This notion generalizes the concept of compact operators and it was introduced by T.\ Kato in \cite{Kato}. Canonical examples of strictly singular non-compact operators are inclusion mappings $i_{p,q} \colon \ell^p \hookrightarrow \ell^q,$ where $1 \le p < q < \infty.$ There also exist strictly singular non-compact operators on $H^p$ spaces for $1 \le p < \infty, \, p \ne 2$. 

The aim of this note is to show that a non-compact operator $T_g$ defined on Hardy spaces $H^p, \, 1 \le p < \infty,$ fixes an isomorphic copy of $\ell^p.$ In particular, this implies that the operator $T_g$ is strictly singular on $H^p$ if and only if it is compact. 
Moreover, this gives a new proof for the equivalence of compactness and weak compactness of $T_g$ on Hardy space $H^1,$ see \cite{LMN}.     

Our main result is the following theorem.
\begin{T_gstrictsing}
\label{T_gstrictsing}
Let $g \in BMOA \setminus VMOA$ and $1 \le p < \infty.$ Then the operator $$T_g \colon H^p \to H^p$$ fixes an isomorphic copy of $\ell^p$ inside $H^p.$ In particular, the operator $T_g$ is not strictly singular, i.e.\ the class of strictly singular operators $T_g$ coincides with the class of compact operators $T_g$.
\end{T_gstrictsing}

We should point out that there is an interesting extrapolation result by Hern{\'a}ndez,  Semenov, and Tradacete\ in \cite[Theorem 3.3]{Hernandez}. It states that if an operator $S$ is bounded on $L^p$ and $L^q$ for some $1 < p < q < \infty$ and strictly singular on $L^r$ for some $p < r < q,$ then it is compact on $L^s$ for all $p < s < q.$ If the corresponding statement for $L^p$ spaces of complex-valued functions is true, then the equivalence of strict singularity and compactness of $T_g$ on $H^p$ for $1 < p < \infty$ follows immediately by using the Riesz projection: 
Recall that strictly singular operators form a two-sided (closed) ideal in the space $\mathcal{L}(L^p)$ of bounded operators on $L^p=L^p(\T),$ where $\T = \partial\D$. Therefore the strict singularity of $T_g\colon H^p \to H^p$ implies that $T_gR\colon L^p \to L^p$ is strictly singular, where $R\colon L^p \to H^p$ is the Riesz projection and we have identified $T_g\colon H^p \to H^p$ with $T_g\colon H^p \to L^p$. Since the condition $g \in BMOA$ characterizes the boundedness of $T_g$ on every $H^q, \, 0 < q < \infty,$ space and the Riesz projection is bounded on the scale $1 < q < \infty$, we get that $T_gR$ is bounded on every $L^q, \, 1 < q < \infty,$ space. Now assuming that the complex version of the interpolation result is valid, it follows that $T_gR$ is compact on $L^p$ and consequently the restriction $T_gR|_{H^p} = T_g$ is compact on $H^p.$

However, Theorem \ref{T_gstrictsing} states more: a non-compact operator $T_g$ on $H^p$ fixes an isomorphic copy of $\ell^p$ and this is also true in the case $p = 1.$

Theorem \ref{T_gstrictsing} also gives a new proof for the equivalence of the compactness and the weak compactness of the operator $T_g$ on $H^1:$ If $g \in BMOA \setminus \nolinebreak VMOA,$ i.e.\ the operator $T_g$  is not compact, then by Theorem  \ref{T_gstrictsing} the operator $T_g$ fixes an infinite-dimensional subspace $M$, an isomorphic copy of $\ell^1.$ The class of compact operators on $\ell^1$ coincides with the class of weakly compact operators on $\ell^1$. As an isomorphism, the restriction $T_g|_M$ is not compact and hence it is not weakly compact. Therefore the operator $T_g$ is not weakly compact.

\section{Preliminaries}

In this section, we briefly remind a reader some common spaces of analytic functions that appear later and state a theorem of Aleman and Cima which we need in the proof our main result  Theorem \ref{T_gstrictsing}. 
 
Let $H(\D)$ be the algebra of analytic functions in $\D$. 
We define Hardy spaces 
$$H^p = \left\{f \in H(\D): \|f\|_p = \left( \sup_{0 \le r < 1}\frac{1}{2\pi}\int_0^{2\pi} |f(re^{it})|^p dt\right)^{1/p} < \infty \right\}.$$ Space $BMOA$ consists of functions $f \in H(\D)$ with
$$\|f\|_* = \sup_{a \in \D}\|f \circ \sigma_a - f(a)\|_2 < \infty,$$ where $\sigma_a(z) = (a-z)/(1-\bar{a}z)$ is the Möbius automorphism of $\D$ that interchanges the origin and the point $a \in \D$. Its closed subspace $VMOA$ consists of those $f \in H(\D)$ with $$\limsup_{|a| \to 1}\|f \circ \sigma_a - f(a)\|_2 = 0.$$ See e.g.\ \cite{Girela} for more information on spaces $BMOA$ and $VMOA$. The Bloch space $\mathcal{B}$ is the Banach space of functions $f \in H(\D)$ s.t.\ $$\sup_{z \in \D}(1-|z|^2)|f'(z)| < \infty.$$

We use notation $A \lesssim B$ to indicate that $A \le cB$ for some positive constant $c$ whose value may change from one occurence into another and which may depend on $p$. If $A \lesssim B$ and $B \lesssim A,$ we say that the quantities $A$ and $B$ are equivalent and write $A \simeq B.$ 

Every $BMOA$ function $f$ satisfies a reverse ``Hölder's inequality'', which implies that for each $0 < p < \infty$ it holds that $$\|f\|_* \simeq \sup_{a \in \D}\|f \circ \sigma_a - f(a)\|_p < \infty,$$ where the proportionality constants depend on $p.$ Similarly, a function $f$ is in $VMOA$ if and only if $$\limsup_{|a| \to 1}\|f \circ \sigma_a - f(a)\|_p = 0.$$ 

The proof of Theorem \ref{T_gstrictsing} utilizes a result of Aleman and Cima \cite[Theorem 3]{Aleman_Cima}. We state it here for convenience.
\begin{aleman_cima}
\label{aleman_cima}
Let $p > 0$ and $g \in H^p.$ For $a \in \D,$ let $\sigma_a(z) = (a - z)/(1 - \bar{a}z)$ and  
$f_a(z) = (1 - |a|^2)^{1/p}/(1 - \bar{a}z)^{2/p}.$ Then for $0 < t < p/2,$ there exists a constant $A_{p,t} > 0$ (depending only on $p$ and $t$) such that  
$$\|g\circ \sigma_a - g(a)\|_t^t \le  A_{p,t}\|T_g f_a\|_p^t.$$
\end{aleman_cima}
%

\section{Main result}

Our goal is to show that a non-compact operator $T_g\colon H^p \to H^p, \, 1 \le p < \infty, \,  g \in BMOA\setminus VMOA$, fixes an isomorphic copy of $\ell^p$ yielding that the compactness and strict singularity are equivalent for $T_g$ on $H^p$. This is done by constructing bounded operators $V\colon \ell^p \to H^p$ and $U\colon \ell^p \to H^p,$ where $V(\ell^p) = M$ is a closure of a linear span of suitably chosen test functions $f_{a_k} \in H^p$ and the operator $U$ is an isomorphism onto its image $U(\ell^p) = T_g(M)$. Then it is straightforward to show that the restriction $T_g|_M\colon M \to T_g(M)$ is bounded from below by a positive constant and consequently an isomorphism, see Figure \ref{fig}. 

\begin{figure}[h]
\caption{\textbf{Operators $U, V$ and $T_g$}}
\label{fig}
\centering
\begin{tikzpicture}
  \matrix (m) [matrix of math nodes,row sep=3em,column sep=4em,minimum width=2em]
  {
     \ell^p & \\ 
     H^p & H^p \\};
  \path[-stealth]
    (m-1-1) edge node [left] {$V$} (m-2-1)
    (m-1-1) edge node [right] {$U$} (m-2-2)        
    (m-2-1.east|-m-2-2) edge node [below] {$T_g$}
            node [above] {} (m-2-2)
            edge [dashed,-] (m-2-1);
\end{tikzpicture}
\end{figure}
The strategy for choosing the suitable test functions in Proposition \ref{bddmap} and Theorem \ref{isomorphism} is similar to the one used by Laitila, Nieminen and Tylli in \cite{Tylli}, where they utilized these test functions to show that a non-compact composition operator $C_\varphi\colon H^p \to H^p,$ where $\varphi\colon \D \to \D$ is analytic, fixes an isomorphic copy of $\ell^p$.

Before proving our main result (Theorem \ref{T_gstrictsing}), we need some preparations. We prove first a localization lemma for the standard test functions in $H^p, \, 1 \le p < \infty$, defined by $$f_a(z) = \left[\frac{1-|a|^2}{(1-\bar{a}z)^2}\right]^{1/p}, \quad z \in \D,$$ for each $a \in \D$.  
\begin{masslemma}
\label{masslemma}
Let $1 \le p < \infty, \, \varepsilon > 0$ and $(a_k) \subset \D$ be a sequence s.t.\ $(|a_k|)$ is increasing and $a_k \to \omega \in \T.$ Define 
$$A_\varepsilon = \{e^{i\theta}: |\theta - \textup{arg}(\omega)| < \varepsilon\}.$$ 
Then
\begin{align*}
&\textrm{(i) $\lim_{k \to \infty}\int_{\T\setminus A_\varepsilon}|f_{a_k}|^p dm = 0.$}& \\ 
&\textrm{(ii) If $k$ is fixed, then $\lim_{\varepsilon \to 0}\int_{A_\varepsilon}|f_{a_k}|^p dm = 0.$}& 
\end{align*}
\end{masslemma}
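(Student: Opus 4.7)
The plan is to exploit the fact that $|f_a(e^{it})|^p = (1-|a|^2)/|1-\bar{a}e^{it}|^2$ is exactly the Poisson kernel $P_a(e^{it})$ at the point $a\in\D$, so both assertions reduce to standard concentration/continuity properties of the Poisson kernel.

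For part (i), I would fix $\varepsilon>0$ and first show that there is a constant $c=c(\varepsilon,\omega)>0$ such that $|1-\bar{a_k}e^{it}|\ge c$ for all $e^{it}\in \T\setminus A_\varepsilon$ and all sufficiently large $k$. This follows because $a_k\to\omega\in\T$, so $|1-\bar{a_k}e^{it}|\to |1-\bar\omega e^{it}|$ uniformly on $\T$, and the limit function is bounded below by a positive constant $c(\varepsilon)$ on $\T\setminus A_\varepsilon$ (it vanishes only at $e^{it}=\omega$, which lies in $A_\varepsilon$). Then
\[
\int_{\T\setminus A_\varepsilon}|f_{a_k}|^p\,dm=\int_{\T\setminus A_\varepsilon}\frac{1-|a_k|^2}{|1-\bar{a_k}e^{it}|^2}\,dm(t)\le \frac{1-|a_k|^2}{c^2}\xrightarrow[k\to\infty]{}0,
\]
since $|a_k|\to 1$.

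For part (ii), with $k$ fixed we have $|a_k|<1$, so the function $t\mapsto|f_{a_k}(e^{it})|^p=(1-|a_k|^2)/|1-\bar{a_k}e^{it}|^2$ is continuous on $\T$ and in particular bounded, by $M_k=(1-|a_k|^2)/(1-|a_k|)^2<\infty$. Therefore
\[
\int_{A_\varepsilon}|f_{a_k}|^p\,dm\le M_k\cdot m(A_\varepsilon)=M_k\cdot\frac{\varepsilon}{\pi}\xrightarrow[\varepsilon\to 0]{}0.
\]
Equivalently, this is just absolute continuity of the finite measure $|f_{a_k}|^p\,dm$ with respect to $m$.

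There is no real obstacle here: the content of the lemma is that the Poisson-type mass $|f_{a_k}|^p$ concentrates at $\omega$ as $k\to\infty$, and for each individual $k$ the mass distributes absolutely continuously around $\T$. The only point requiring a line of care is the uniform lower bound on $|1-\bar{a_k}e^{it}|$ away from $\omega$ in part (i), which I would obtain by the triangle inequality $|1-\bar{a_k}e^{it}|\ge |1-\bar\omega e^{it}|-|a_k-\omega|$ and then choosing $k$ large enough that $|a_k-\omega|$ is at most half of $\min_{e^{it}\notin A_\varepsilon}|1-\bar\omega e^{it}|$.
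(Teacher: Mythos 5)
Your proof is correct and follows essentially the same route as the paper: in (i) you bound $|1-\bar{a_k}\zeta|$ from below away from $\omega$ for large $k$ (the paper does this via $|1-\bar{a_k}\zeta|\gtrsim|\omega-\zeta|\gtrsim\varepsilon$, you via the triangle inequality) and let $1-|a_k|^2\to 0$ do the work, and in (ii) you invoke the absolute continuity of $|f_{a_k}|^p\,dm$, exactly as the paper does. No gaps.
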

\begin{proof}

\textbf{(i)} Fix $\varepsilon > 0.$ It holds that $$|1 - \bar{a_k}\zeta| \gtrsim |1 - \bar{\omega} \zeta| \ge |\omega - \zeta| \gtrsim \varepsilon$$ for $\zeta \in \T \setminus A_\varepsilon$ and large enough $k$. Thus $$|f_{a_k}(\zeta)|^p = \frac{1 - |a_k|^2}{|1-\bar{a_k}\zeta|^2} \le  \frac{1 - |a_k|^2}{|\omega-\zeta|^2} \lesssim \frac{1-|a_k|^2}{\varepsilon^2}$$ when $\zeta \in \T \setminus A_\varepsilon,$ and it follows that $$\lim_{k \to \infty}\int_{\T\setminus A_\varepsilon}|f_{a_k}|^p dm = 0.$$

\textbf{(ii)} Fix $k.$ It follows from the absolute continuity of a measure $B \mapsto \int_B |f_{a_k}|^p dm$ that $\int_{A_\varepsilon} |f_{a_k}|^p dm \to 0$ as $\varepsilon \to 0.$
\end{proof}

Next, utilizing test functions $f_{a_k}, a_k \in \D,$ for which $|a_k| \to 1$ sufficiently fast, we construct a bounded operator $V\colon \ell^p \to H^p$.
\begin{bddmap}
\label{bddmap}
Let $1 \le p < \infty$ and $(a_n) \subset \D$ be a sequence s.t.\ $(|a_n|)$ is increasing and $a_n \to \omega \in \T.$ Then there exists a subsequence $(b_n) \subset (a_n)$ so that the mapping $$S \colon \ell^p \to H^p, \, S(\alpha) = \sum_{n = 1}^\infty \alpha_n f_n,$$ where $\alpha = (\alpha_n) \in \ell^p$ and $f_n = f_{b_n},$ is bounded. In particular, every mapping $$V \colon \ell^p \to H^p, \, V(\alpha) = \sum_{n = 1}^\infty \alpha_n f_{c_n},$$ where $ (c_n) \subset (b_n),$ is bounded.
\end{bddmap}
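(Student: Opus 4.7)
The strategy is to extract $(b_n)$ from $(a_n)$ together with a nested shrinking family of arcs around $\omega$ such that the test functions $f_{b_n}$ become, up to a summable error in $H^p$-norm, disjointly supported on $\T$; boundedness of $S$ will then follow by comparison with a disjointly supported family in $L^p(\T)$. The basic quantitative fact in the background is that $\|f_a\|_p = 1$ for every $a \in \D$, since $|f_a|^p$ is the Poisson kernel at $a$, and hence each truncation of $f_{a}$ to a measurable subset of $\T$ has $L^p$-norm at most $1$.

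The core of the argument is an inductive construction that alternates the two halves of Lemma \ref{masslemma}. I would choose any $\varepsilon_1 > 0$; using (i), pick $b_1$ far enough along $(a_k)$ that $\int_{\T\setminus A_{\varepsilon_1}}|f_{b_1}|^p\,dm < 2^{-1}$; then, using (ii), pick $\varepsilon_2 \in (0,\varepsilon_1)$ so that $\int_{A_{\varepsilon_2}}|f_{b_1}|^p\,dm < 2^{-1}$. Having chosen $b_1,\ldots,b_{n-1}$ and $\varepsilon_1 > \cdots > \varepsilon_n$, pick $b_n$ further along $(a_k)$ than $b_{n-1}$ with $\int_{\T\setminus A_{\varepsilon_n}}|f_{b_n}|^p\,dm < 2^{-n}$, then $\varepsilon_{n+1} \in (0,\varepsilon_n)$ with $\int_{A_{\varepsilon_{n+1}}}|f_{b_n}|^p\,dm < 2^{-n}$. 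This is well defined: (i) permits the first choice because $|b_n|$ is free to approach $1$, and (ii) permits the second because $b_n$ has just been fixed.

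With the construction in hand, I would set $B_n = A_{\varepsilon_n}\setminus A_{\varepsilon_{n+1}}$ and $\phi_n = f_{b_n}\chi_{B_n}$. The $B_n$ are pairwise disjoint, each $\|\phi_n\|_p \le 1$, and $\|f_{b_n} - \phi_n\|_p^p < 2^{1-n}$. For $\alpha \in \ell^p$, disjointness of supports gives $\|\sum_n\alpha_n\phi_n\|_p \le \|\alpha\|_p$, while the inequality $|\alpha_n| \le \|\alpha\|_p$ (valid for $p \ge 1$) together with the summability of $2^{(1-n)/p}$ yields
$$\Big\|\sum_n\alpha_n(f_{b_n}-\phi_n)\Big\|_p \le \sum_n |\alpha_n|\,\|f_{b_n}-\phi_n\|_p \le C\|\alpha\|_p$$
for a constant $C = C(p) < \infty$. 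Combining the two estimates bounds $\|S\alpha\|_p$ by $(1+C)\|\alpha\|_p$. The final assertion about $V$ is then automatic: restricting to a subsequence $(c_n) \subset (b_n)$ amounts to precomposing $S$ with the isometric embedding $\ell^p \hookrightarrow \ell^p$ that inserts zeros in the omitted slots.

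The main obstacle is organizing the alternating selection with the correct order of quantifiers: one needs the tail outside $A_{\varepsilon_n}$ small for $f_{b_n}$ (forcing $b_n$ to be chosen after $\varepsilon_n$), yet also the tail inside $A_{\varepsilon_{n+1}}$ small for $f_{b_n}$ (forcing $\varepsilon_{n+1}$ to be chosen after $b_n$). Lemma \ref{masslemma} is tailored precisely so that both demands can be met consecutively, and the scheme above operationalizes this; once the disjointification is achieved, the boundedness estimate is routine.
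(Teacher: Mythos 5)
Your proposal is correct and follows essentially the same route as the paper: the same alternating selection of $b_n$ and shrinking arcs $A_{\varepsilon_n}$ via the two halves of Lemma \ref{masslemma}, followed by exploiting the near-disjointness of the supports, with the subsequence claim for $V$ handled by precomposing with the zero-inserting isometry of $\ell^p$. The only (harmless) difference is organizational: you split each $f_{b_n}$ into a disjointly supported piece $\phi_n$ plus a summable error, whereas the paper estimates $\int_{A_n\setminus A_{n+1}}|\sum_j\alpha_jf_j|^p$ directly over the annular decomposition; both yield the same bound.
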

\begin{proof}
For each $\varepsilon > 0,$ we define a set $A_{\varepsilon} = \{e^{i\theta}: |\theta - \textup{arg}(\omega)| < \varepsilon\}.$ 
Using the fact that $\|f_a \|_p = 1$ for all $a \in \D$ and Lemma \ref{masslemma}, 
we choose positive numbers $\varepsilon_n$ with $\varepsilon_1 > \varepsilon_2 > \ldots > 0$ and numbers $b_n \in (a_n)$ s.t.\ the following conditions hold
\begin{eqnarray*}
&\textup{(i)}& \left(\int_{A_n} |f_j|^p dm \right)^{1/p} < 4^{-n}, \quad j = 1,\ldots, n - 1; \\ 
&\textup{(ii)}& \left(\int_{\T \setminus A_n} |f_n|^p dm \right)^{1/p} < 4^{-n}; \\
\bigg(&\textup{(iii)}& \left(\int_{A_n} |f_n|^p dm \right)^{1/p} \le \|f_n \|_p = 1\bigg)
\end{eqnarray*}  
for every $n \in \N,$ where $A_n = A_{\varepsilon_n}.$

Using conditions (i)-(iii), we show the upperbound $\|S\alpha\|_p \le C \|\alpha\|_{\ell^p}$ for all $\alpha = (\alpha_j) \in \ell^p,$ where $C > 0$ may depend on $p$.

\begin{eqnarray*}
\|S\alpha\|_p^p &=& \int_{\T}\left|\sum_{j=1}^\infty \alpha_j f_j \right|^p dm = \sum_{n = 1}^\infty \int_{A_n \setminus A_{n+1}}\left|\sum_{j=1}^\infty \alpha_j f_j \right|^p dm 
\\
&\le& \sum_{n = 1}^\infty \left( \sum_{j = 1}^\infty |\alpha_j| \left( \int_{A_n \setminus A_{n+1}}|f_j|^p dm \right)^{1/p} \right)^p 
\\
&\le& \sum_{n = 1}^\infty \left( |\alpha_n|\left( \int_{A_n \setminus A_{n+1}}|f_n|^p dm \right)^{1/p}+\sum_{j \ne n} |\alpha_j| \left( \int_{A_n \setminus A_{n+1}}|f_j|^p dm \right)^{1/p} \right)^p,
\end{eqnarray*}
where 
\begin{equation}
\label{eq: est1}
\left( \int_{A_n \setminus A_{n+1}}|f_j|^p dm \right)^{1/p} \le \left( \int_{A_n }| f_j|^p dm \right)^{1/p} < 4^{-n}
\end{equation} 
for $j < n$ by condition (i) and 
\begin{equation}
\label{eq: est2}
\left( \int_{A_n \setminus A_{n+1}}|f_j|^p dm \right)^{1/p} \le \left( \int_{\T \setminus A_j }|f_j|^p dm \right)^{1/p} < 4^{-j} 
\end{equation}
for $j > n$ by condition (ii). Thus by estimates \eqref{eq: est1} and \eqref{eq: est2}, it always holds that 
\begin{equation}
\label{eq: est3}
\left(\int_{A_n \setminus A_{n+1}}|f_j|^p dm \right)^{1/p} < 2^{-n-j}
\end{equation} 
for $j \ne n.$ By using estimate \eqref{eq: est3} we get
\begin{eqnarray*}
\|S(\alpha)\|_p^p &\le& \sum_{n = 1}^\infty \left( |\alpha_n|\left( \int_{A_n \setminus A_{n+1}}|f_n|^p dm \right)^{1/p}+\sum_{j \ne n} |\alpha_j| 2^{-n-j} \right)^p 
\\
&\le& \sum_{n = 1}^\infty (|\alpha_n| + \|\alpha\|_{\ell^p} 2^{-n})^p 
\\
&\le& 2^p \left(\sum_{n = 1}^\infty |\alpha_n|^p + \|\alpha\|_{\ell^p}^p \sum_{n = 1}^\infty 2^{-np}\right) 
= 2^{p+1}\|\alpha\|_{\ell^p}^p,
\end{eqnarray*}
where we also used condition (iii) in the second inequality.

Let $(c_k)$ be a subsequence of $(b_n).$ Then $(c_k) = (b_{n_k})$ for some sequence $0 < n_1 < n_2 < \ldots.$ By considering an isometry $$J\colon \ell^p \to \ell^p, \, (\alpha_k) \mapsto (\beta_j),$$ where $$\beta_j = \begin{cases} \alpha_k, &\mbox{ if } j = n_k \mbox{ for some $k$} \\ 0, &\mbox{ otherwise}, \end{cases}$$ we see that the operator $V = SJ$ is bounded.
\end{proof}
For a non-compact bounded operator $U$ on a Banach space of analytic functions, there exists a weakly (or weak-star in non-reflexive space case) convergent sequence $(g_n)$ so that the sequence $(Ug_n)$ of image points does not converge to zero in norm. The next result states that for a non-compact operator $T_g$ on $H^p$ we can find a sequence $(f_k)$ of test functions converging weakly to zero (or in the weak-star topology for $p=1$) so that the sequence $(T_gf_k)$ converges to a positive constant in norm. The proof is based on Theorem \ref{aleman_cima} of Aleman and Cima.
\begin{normlimit}
\label{normlimit}
Let $g \in BMOA \setminus VMOA$ and $1 \le p < \infty.$ Then there exists a constant $c > 0$ s.t.\ $$\limsup_{|a| \to 1}\|T_g f_a\|_p = c.$$ In particular, there exists a sequence $(a_k) \subset \D$ s.t.\ $$0 < |a_1| < |a_2| < \ldots < 1$$ and $a_k \to \omega \in \T$ so that $$\lim_{k \to \infty}\|T_g f_k\|_p = c.$$ 
\end{normlimit}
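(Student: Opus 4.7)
The plan is to establish the existence of the constant $c$ as an upper limit (finite and positive) and then extract a subsequence by a compactness and diagonal argument. Finiteness comes directly from the boundedness of $T_g$ on $H^p$, while positivity relies on Theorem \ref{aleman_cima} applied to the hypothesis $g \notin VMOA$.

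First I would observe that since $g \in BMOA$ the operator $T_g \colon H^p \to H^p$ is bounded, and since every standard test function satisfies $\|f_a\|_p = 1$ for $a \in \D$, the quantity $c := \limsup_{|a| \to 1}\|T_g f_a\|_p$ lies in $[0, \|T_g\|]$, so it is well-defined and finite.

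To show $c > 0$, I would fix any exponent $t$ with $0 < t < p/2$ (which is always possible since $p \ge 1$) and apply Theorem \ref{aleman_cima}:
$$\|g \circ \sigma_a - g(a)\|_t^t \le A_{p,t}\, \|T_g f_a\|_p^t, \quad a \in \D.$$
Taking $\limsup$ as $|a| \to 1$ of both sides and invoking the equivalent $VMOA$ characterization recorded in the preliminaries (valid for every $0 < t < \infty$), the left-hand side is strictly positive because $g \notin VMOA$. Hence $c^t > 0$, so $c > 0$.

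To produce the desired sequence, I would pick $(b_k) \subset \D$ with $|b_k| \to 1$ and $\|T_g f_{b_k}\|_p \to c$ from the definition of $\limsup$, pass to a subsequence converging to some $\omega \in \T$ by compactness of $\overline{\D}$, and finally thin out recursively to ensure that the moduli form a strictly increasing sequence bounded below by a positive number. Relabelling gives the required $(a_k)$. The argument is essentially routine once Theorem \ref{aleman_cima} is in hand; the only mild subtlety is the constraint $t < p/2$, which is harmless because the $VMOA$ equivalence holds in every $L^t$ with $0 < t < \infty$, so I never run out of admissible exponents, and no genuine obstacle appears.
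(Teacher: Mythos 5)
Your proposal is correct and follows essentially the same route as the paper: both obtain positivity of $c=\limsup_{|a|\to 1}\|T_gf_a\|_p$ by applying Theorem \ref{aleman_cima} with an admissible exponent $t<p/2$ and then using the fact that $g\notin VMOA$ forces $\limsup_{|a|\to 1}\|g\circ\sigma_a-g(a)\|_t>0$ (the paper phrases this via $\operatorname{dist}(g,VMOA)\simeq\limsup_{|a|\to1}\|g\circ\sigma_a-g(a)\|_t$ from Lemma 3 of \cite{LMN}, while you cite the equivalent characterization already recorded in the preliminaries), and both extract the sequence by compactness of $\overline{\D}$ and a routine thinning. No gaps.
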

\begin{proof}

It follows from Theorem \ref{aleman_cima} that for all $t \in (0,p/2)$ there exists a constant $C = C(p,t) > 0$ s.t.\ 
\begin{equation}
\label{eq: ineq1}
\| T_g f_a\|_p^t \ge C \|g \circ \sigma_a - g(a)\|_t^t 
\end{equation}
for all $a \in \D,$ where $\sigma_a(z) = (a-z)/(1-\bar{a}z).$ 
For each $0 < q < \infty,$ it holds that $$\textup{dist}(g, VMOA) \simeq \limsup_{|a| \to 1}\|g \circ \sigma_a - g(a) \|_q,$$ where the constants of comparison depend on $q$, 
see, e.g.\ Lemma 3 in \cite{LMN}. Thus by choosing $t = p/4$ in \eqref{eq: ineq1} and using Lemma 3 in \cite{LMN} we get 
$$\limsup_{|a| \to 1}\|T_g f_a\|_p \ge C'' \limsup_{|a| \to 1}\|g \circ \sigma_a - g(a)\|_{p/4} \simeq \textup{dist}(g, VMOA) > 0,$$ since $g \in BMOA \setminus VMOA.$ 
Thus there exists a constant $c > 0$ s.t.\ $$\limsup_{|a| \to 1}\|T_g f_a\|_p = c.$$ In particular, by the compactness of $\overline{\D}$ there exists a sequence $(a_k) \subset \D$ s.t.\ $0 < |a_1| < |a_2| < \ldots < 1$ and $a_k \to \omega \in \T$ so that 
$$\lim_{k \to \infty}\|T_g f_k\|_p = c.$$
\end{proof}
The next lemma is a generalization of Lemma 5 in \cite{LMN} for $1 \le p < \infty$.
\begin{localization}
\label{localization}
Let $a \in \D, \, 1 \le p < \infty, \, g \in BMOA$ and $$f_a(z) = \frac{(1-|a|)^{1/p}}{(1-\bar{a}z)^{2/p}}, \quad z \in \D.$$ Define $$I(a) = \left\{e^{i\theta}: |\theta - \textup{arg}(a)| < (1 - |a|)^{\frac{1}{2(2+p)}}\right\}.$$ Then $$\lim_{|a| \to 1}\int_{\T \setminus I(a)} |T_gf_a|^p dm = 0.$$
\end{localization}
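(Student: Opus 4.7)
The plan is to combine integration by parts with the pointwise localization of the test function $f_a$. Writing $T_g=T_{g-g(a)}$ (both operators agree since the integrand involves only derivatives) and integrating by parts yields
$$
T_g f_a(z) = f_a(z)(g(z)-g(a)) - f_a(0)(g(0)-g(a)) - R(z), \quad R(z) := \int_0^z f_a'(w)(g(w)-g(a))\,dw.
$$
I would estimate the $L^p(\T\setminus I(a))$-norm of each of the three terms separately and conclude that all three tend to $0$ as $|a|\to 1$.

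The key pointwise input is the geometric lower bound
$$
|1-\bar{a}s\zeta|\gtrsim s\delta + (1-s|a|), \quad \zeta\in\T\setminus I(a),\ s\in[0,1],
$$
where $\delta = \delta(a) := (1-|a|)^{1/(2(2+p))}$. This immediately gives
$$
\|f_a\|_{L^\infty(\T\setminus I(a))} \lesssim \frac{(1-|a|)^{1/p}}{\delta^{2/p}} = (1-|a|)^{(p+1)/(p(p+2))}
$$
and an analogous bound for $|f_a'(s\zeta)|$. The constant term is trivial using $|f_a(0)|=(1-|a|)^{1/p}$ and the Bloch estimate $|g(a)|\lesssim \log\tfrac{2}{1-|a|}$ (since $BMOA\subset \mathcal B$). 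For the main term, H\"older gives
$$
\|f_a(g-g(a))\|_{L^p(\T\setminus I(a))} \le \|f_a\|_{L^\infty(\T\setminus I(a))}\,\|g-g(a)\|_{L^p(\T)} \lesssim (1-|a|)^{(p+1)/(p(p+2))}\log\tfrac{2}{1-|a|}\to 0,
$$
where I use $\|g\|_{L^p(\T)}\lesssim \|g\|_*$ together with the Bloch bound on $|g(a)|$.

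For the remainder $R$, I would bound $|R(r\zeta)|$ pointwise along the radial path $w=s\zeta$ using $|f_a'(w)|\lesssim (1-|a|)^{1/p}|1-\bar{a}w|^{-(p+2)/p}$ and $|g(s\zeta)-g(a)|\lesssim \log\tfrac{2}{1-s}+\log\tfrac{2}{1-|a|}$. Splitting the resulting $s$-integral at $1-s\simeq\delta$ and computing directly should produce
$$
\sup_{r<1,\ \zeta\in\T\setminus I(a)}|R(r\zeta)| \lesssim (1-|a|)^{(p+1)/(p(p+2))}\log\tfrac{2}{1-|a|}\longrightarrow 0.
$$
Combining the three estimates with Fatou's lemma then gives the conclusion. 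The main obstacle is controlling $R$: one must integrate a singular kernel $(s\delta+(1-s|a|))^{-(p+2)/p}$ against a logarithmically growing factor, and verify that the specific exponent $\tfrac{1}{2(2+p)}$ in the definition of $\delta$ is chosen so that the singular contribution $\delta^{-2/p}$ is still strictly dominated by the prefactor $(1-|a|)^{1/p}$, leaving a positive power of $(1-|a|)$ that annihilates the logarithm.
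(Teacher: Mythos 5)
Your proposal is correct and follows essentially the same route as the paper: integrate by parts to trade $g'$ for $g$, bound $|f_a|$ and $|f_a'|$ off $I(a)$ via $|1-\bar{a}s\zeta|\gtrsim \delta$, and control $g$ by the Bloch-type logarithmic growth estimate, with the exponent $\tfrac{1}{2(2+p)}$ indeed leaving a positive power of $1-|a|$ that absorbs the logarithms. The paper's only simplifications are to normalize $g(0)=0$ instead of subtracting $g(a)$ (so no $\log\tfrac{2}{1-|a|}$ enters the remainder) and to bound $|f_a'(s\zeta)|$ uniformly in $s$, which makes the radial integral a one-line estimate rather than requiring a splitting at $1-s\simeq\delta$.
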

\begin{proof}
By rotation invariance, we may assume that $a \in (0,1).$ Also, $g(0) = 0.$  It holds that $|1 - a s e^{i\theta}| \ge C |\theta|$ for all $0 \le s < 1$ and $|\theta| \le \pi,$ where $C > 0$ is an absolute constant. Thus for all $0 \le s < 1$ and $(1 - a)^{\frac{1}{2(2+p)}} \le |\theta| \le \pi$ we have $$|f_a(se^{i\theta})|^p \lesssim \frac{1-a}{|1 - a s e^{i\theta}|^2} \lesssim \frac{1-a}{|\theta|^2} \le (1-a)^{1 - \frac{1}{2+p}}$$ and $$|f_a'(a s e^{i\theta})|^p \lesssim \frac{1 - a}{|1 - a s e^{i\theta}|^{2+p}} \lesssim \frac{1-a}{|\theta|^{2+p}} \le (1-a)^{1/2}.$$ For $\zeta \in \T \setminus I(a)$, we obtain 
\begin{eqnarray*}
|T_g f_a (\zeta)|^p &=& \left|\int_0^1 f_a(s \zeta)g'(s \zeta) \zeta ds \right|^p 
\\
&\le& 2^p \left( |f_a(\zeta)g(\zeta)|^p + \left(\int_0^1 |f_a'(s\zeta)g(s\zeta)|ds\right)^p \right) 
\\
&\lesssim& (1-a)^{1-\frac{1}{2+p}}|g(\zeta)|^p + (1 - a)^{1/2} \left(\int_0^1 |g(s\zeta)|ds\right)^p. 
\end{eqnarray*}
Since $g \in BMOA \subset \mathcal{B},$ it holds that $|g(z)| \lesssim \log\left( \frac{1}{1 - |z|}\right)$ and consequently $\int_0^1 |g(s\zeta)|ds \lesssim \|g \|_{*},$ where $C > 0$ is an absolute constant and $\|g \|_{*} = \sup_{a \in \D}\|g \circ \sigma_a - g(a) \|_2.$ Therefore $$\int_{\T \setminus I(a)} |T_gf_a|^p dm \lesssim (1 - a)^{1 - \frac{1}{2+p}} \|g\|_p^p + (1 - a)^{1/2} \|g\|_{*}^p \to 0$$ as $a \to 1,$ where $\|g\|_p \le \sup_{a \in \D}\|g \circ \sigma_a - g(a) \|_p \simeq \|g \|_{*}$.
\end{proof}
Using Lemma \ref{localization}, we prove the following localization result for the images $T_gf_a, \, a \in \D,$ of the test functions $f_a$ (cf. Lemma \ref{masslemma}).
\begin{localization2}
\label{localization2}
Let $(a_k) \subset \D$ be s.t.\ $0 < |a_1| < |a_2| < \ldots < 1$ and $a_k \to \omega \in \T.$  Define $$A_\varepsilon = \{e^{i\theta}: |\theta - \textup{arg}(\omega)| < \varepsilon\}$$ for each $\varepsilon > 0$ and $f_k = f_{a_k}$. Then
\begin{align*}
&\textrm{(i) $\lim_{k \to \infty}\int_{\T\setminus A_\varepsilon}|T_g f_k|^p dm = 0$ for every $\varepsilon > 0.$}& \\ 
&\textrm{(ii) If $k$ is fixed, then $\lim_{\varepsilon \to 0}\int_{A_\varepsilon}|T_g f_k|^p dm = 0.$}& 
\end{align*}

\end{localization2}
\begin{proof}
\textbf{(i)} Let $\varepsilon > 0$. Since $a_k \to \omega,$ we have $|\textup{arg}(a_k) - \textup{arg}(\omega)| < \frac{\varepsilon}{2}$ and $(1 - |a_k|)^{\frac{1}{2(2+p)}} < \frac{\varepsilon}{2}$ for $k$ large enough. Consequently we have $$I(a_k) = \left\{e^{i\theta}: |\theta - \textup{arg}(a_k)| < (1 - |a_k|)^{\frac{1}{2(2+p)}}\right\}\subset A_\varepsilon$$ for $k$ large enough. Thus by Lemma \ref{localization} $$\int_{\T\setminus A_\varepsilon}|T_g f_k|^p dm \le \int_{\T \setminus I(a_k)} |T_g f_k|^p dm \to 0$$ as $k \to \infty.$

\textbf{(ii)} If $k$ is fixed, then it follows from the absolute continuity of a measure $B \mapsto \int_B |T_g f_k|^p dm$ that $\int_{A_\varepsilon} |T_g f_k|^p dm \to 0$ as $\varepsilon \to 0.$
\end{proof}
As a final step before the proof of Theorem \ref{T_gstrictsing}, we construct an isomorphism $U\colon \ell^p \to H^p$ using a non-compact $T_g$ and test functions.
\begin{isomorphism}
\label{isomorphism}
Let $g \in BMOA \setminus VMOA, \, 1 \le p < \infty$ and $(a_n) \subset \D$ be the sequence from Proposition \ref{normlimit}. Then there exists a subsequence $(b_n) \subset (a_n)$ s.t.\ the mapping $$U\colon \ell^p \to H^p,\, U(\alpha)=\sum_{n = 1}^\infty \alpha_n T_g f_n,$$ where $\alpha = (\alpha_n) \in \ell^p$ and $f_n = f_{b_n}$, is an isomorphism onto its image.
\end{isomorphism}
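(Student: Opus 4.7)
The plan is to transpose the construction of Proposition \ref{bddmap} to the image sequence $G_n := T_g f_{b_n}$. Proposition \ref{normlimit} supplies a sequence $(a_n) \subset \D$ along which $\|T_g f_{a_n}\|_p$ converges to a positive constant $c$; after passing to a subsequence we may assume $\|T_g f_{a_n}\|_p \ge c/2$ throughout. Lemma \ref{localization2} is the image-side analogue of Lemma \ref{masslemma} and supplies precisely the two ingredients needed to make the $G_n$ \emph{almost disjointly supported} in $L^p(\T)$, with $L^p$-mass concentrated in pairwise disjoint arc-annuli around the boundary point $\omega$. The operator $U$ should then behave like a block-basis embedding of $\ell^p$ into $L^p$.

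Concretely, I would fix a parameter $M > 1$ (to be taken large at the end, depending on $p$ and $c$) and by recursion on $n$ choose $\varepsilon_n \downarrow 0$ and $b_n \in (a_n)$ so that, writing $A_n := A_{\varepsilon_n}$ and $G_n := T_g f_{b_n}$,
\[
\mathrm{(i)}\ \int_{A_n}|G_j|^p\,dm < M^{-np}\ \text{for } j < n, \quad \mathrm{(ii)}\ \int_{\T\setminus A_n}|G_n|^p\,dm < M^{-np}, \quad \mathrm{(iii)}\ \|G_n\|_p \ge c/2.
\]
Given $G_1,\dots,G_{n-1}$, condition (i) is arranged by shrinking $\varepsilon_n$ via Lemma \ref{localization2}(ii); condition (ii) is then arranged by pushing $b_n$ far along $(a_n)$ via Lemma \ref{localization2}(i); (iii) persists from the preliminary subsequence. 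The upper bound $\|U\alpha\|_p \le C\|\alpha\|_{\ell^p}$ is then a verbatim repetition of the boundedness argument in Proposition \ref{bddmap}, with $f_j$ replaced by $G_j$.

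For the lower bound, set $E_n := A_n \setminus A_{n+1}$; these are pairwise disjoint. For $j \ne n$, conditions (i)--(ii) give $\|G_j\|_{L^p(E_n)} \le M^{-\max(n,j)}$, since for $j < n$ one has $E_n \subset A_n$ and for $j > n$ one has $E_n \subset \T \setminus A_j$. On the diagonal,
\[
\|G_n\|_{L^p(E_n)}^p = \|G_n\|_p^p - \int_{\T\setminus A_n}|G_n|^p\,dm - \int_{A_{n+1}}|G_n|^p\,dm \ge (c/2)^p - 2M^{-np},
\]
where the last integral is controlled by applying condition (i) at level $n+1$ with $j=n$; for $M$ large this is bounded below uniformly by some $\kappa^p > 0$. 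Reverse Minkowski on each $E_n$ then yields
\[
\|U\alpha\|_{L^p(E_n)} \ge |\alpha_n|\,\kappa - \sum_{j \ne n}|\alpha_j|\,M^{-\max(n,j)},
\]
and a further application of Minkowski in $\ell^p$, after bounding the error sum by Hölder's inequality (for $p > 1$) or directly (for $p=1$) as $\delta_M\|\alpha\|_{\ell^p}$ with $\delta_M \to 0$ as $M \to \infty$, gives $\|U\alpha\|_p \ge (\kappa - \delta_M)\|\alpha\|_{\ell^p}$. Choosing $M$ so large that $\delta_M < \kappa/2$ produces the desired uniform lower bound, hence the isomorphism.

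The main technical obstacle is this lower bound step: the reverse triangle inequality interacts non-trivially with the $p$-th power summation, and some care is required in the case $p=1$ where one cannot invoke a finite conjugate exponent for Hölder. The crucial device is arranging the off-diagonal $L^p$-mass to decay geometrically in \emph{both} indices $n$ and $j$ (via the $M^{-\max(n,j)}$ bound), so that the total error summed over $n$ is strictly smaller than the diagonal contribution $\kappa\|\alpha\|_{\ell^p}$.
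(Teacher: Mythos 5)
Your proposal is correct and follows essentially the same route as the paper: the same inductive choice of shrinking arcs $A_{\varepsilon_n}$ and parameters $b_n$ via Lemma \ref{localization2}, the same decomposition of $\T$ into the disjoint annular arcs $A_n \setminus A_{n+1}$, and the same reverse-triangle-inequality lower bound driven by off-diagonal $L^p$-mass that decays geometrically in both indices. The only cosmetic difference is that the paper obtains the upper bound for free from the factorization $U = T_g V$, with $V$ bounded by Proposition \ref{bddmap}, rather than re-running the almost-disjointness argument on the images $T_g f_{b_n}$.
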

\begin{proof}
We need to show that $\|U(\alpha)\|_p \simeq \|\alpha\|_{\ell^p}$ for all $\alpha=(\alpha_n) \in \ell^p.$  
By Proposition \ref{bddmap} there exists a subsequence $(c_n) \subset (a_n)$ inducing a bounded operator $$S\colon \ell^p \to H^p, \, S(\alpha) = \sum_{n = 1}^\infty \alpha_n f_{c_n}$$  
and for any subsequence $(b_n)$ of $(c_n)$ the operator $$V\colon \ell^p \to H^p, \, V(\alpha) = \sum_{n = 1}^\infty \alpha_n f_{b_n}$$ is bounded. 
Therefore the upperbound $\lesssim$ follows from Proposition \ref{bddmap} and the boundedness of the operator $T_g$: 
\begin{eqnarray}
\label{eq: bddness}
\left\|T_g\left(\sum_{n = 1}^\infty \alpha_n f_{b_n}\right)\right\|_p &\le& \|T_g\|_{H^p \to H^p} \left\|\sum_{n = 1}^\infty \alpha_n f_{b_n} \right\|_p 
= \|T_g\|_{H^p \to H^p} \|V(\alpha)\| 
\nonumber \\
&\lesssim& \|T_g\|_{H^p \to H^p} \|\alpha\|_{\ell^p},
\end{eqnarray}
where $(b_n)$ is any subsequence of $(c_n).$ 

Before proving the lowerbound $\gtrsim,$ we make some preparations. 
Since $(c_n) \subset (a_n),$ it holds that $c_n \to \omega \in \T$ and there exists a constant 
$c > 0$ s.t.\ $\lim_{n \to \infty}\|T_g f_{c_n}\|_p = c$ by Proposition \ref{normlimit}. For each $\varepsilon > 0,$ we define a set $A_\varepsilon = \{e^{i\theta}: |\theta- \textup{arg}(\omega)| < \varepsilon\}.$ Also, we define sequences $(\varepsilon_n)$ 
and $(b_n) \subset (c_n)$ inductively using Proposition \ref{normlimit} and Lemma \ref{localization2} in the following way: 

We choose positive numbers $\varepsilon_n$ and $b_n \in (c_n)$ with $$\varepsilon_1 > \varepsilon_2 > \ldots > 0$$ s.t.\ the following conditions hold
\begin{eqnarray*}
&\textup{(i)}& \left(\int_{A_n} |T_g f_j|^p dm \right)^{1/p} < 4^{-n} \delta c, \quad j = 1,\ldots, n - 1; \\ 
&\textup{(ii)}& \left(\int_{\T \setminus A_n} |T_g f_n|^p dm \right)^{1/p} < 4^{-n} \delta c; \\
&\textup{(iii)}& \frac{c}{2} \le \left(\int_{A_n} |T_g f_n|^p dm \right)^{1/p} \le 2 c
\end{eqnarray*}  
for every $n \in \N,$ where $A_n = A_{\varepsilon_n}, \, f_n = f_{b_n}$ and $\delta > 0$ is a constant whose value is determined later.

Now we are ready to prove the lower estimate $\|U\alpha\|_p \ge C \|\alpha\|_{\ell^p},$ where the constant $C > 0$ may depend on $p.$ 
\begin{eqnarray*}
&&\|U\alpha\|_p^p = \int_{\T}\left|\sum_{j=1}^\infty \alpha_j T_g f_j \right|^p dm = \sum_{n = 1}^\infty \int_{A_n \setminus A_{n+1}}\left|\sum_{j=1}^\infty \alpha_j T_g f_j \right|^p dm
\\
&\ge& \sum_{n = 1}^\infty \left(|\alpha_n| \left(\int_{A_n \setminus A_{n+1}}| T_g f_n|^p dm\right)^{1/p} 
-\sum_{j \ne n}|\alpha_j|\left(\int_{A_n \setminus A_{n+1}}| T_g f_j|^p dm\right)^{1/p} \right)^p,
\end{eqnarray*}
where $$\left( \int_{A_n \setminus A_{n+1}}|T_g f_j|^p dm \right)^{1/p} \le \left( \int_{A_n }|T_g f_j|^p dm \right)^{1/p} < 4^{-n}\delta c$$ for $j < n$ by condition (i) and $$\left( \int_{A_n \setminus A_{n+1}}|T_g f_j|^p dm \right)^{1/p} \le \left( \int_{\T \setminus A_j }|T_g f_j|^p dm \right)^{1/p} < 4^{-j}\delta c$$ for $j > n$ by condition (ii). Thus it always holds that $$\left(\int_{A_n \setminus A_{n+1}}|T_g f_j|^p dm \right)^{1/p} < 2^{-n-j}\delta c$$ for $j \ne n.$ Consequently, we can estimate
\begin{eqnarray*}
\|U\alpha\|_p^p &\ge& \sum_{n = 1}^\infty \left(|\alpha_n| \left(\int_{A_n \setminus A_{n+1}}| T_g f_n|^p dm\right)^{1/p} - \sum_{j = 1}^\infty |\alpha_j| 2^{-n-j}\delta c \right)^p 
\\
&\ge&  \sum_{n = 1}^\infty \left(|\alpha_n| \left(\frac{c}{2}- 4^{-n-1}\delta c \right) - \|\alpha\|_{\ell^p} 2^{-n} \delta c \right)^p
\\
&\ge& \sum_{n = 1}^\infty \left( \frac{c}{2}|\alpha_n| - \|\alpha\|_{\ell^p}(4^{-n-1} + 2^{-n})\delta c \right)^p 
\\
&\ge& \sum_{n = 1}^\infty \left( \frac{c}{2}|\alpha_n| - 2^{-n+1} \delta c \|\alpha\|_{\ell^p} \right)^p
\\
&\ge& \sum_{n = 1}^\infty \left( 2^{-p} \left(\frac{c}{2}\right)^p|\alpha_n|^p - 2^{(-n+1)p}\delta^p c^p \|\alpha\|_{\ell^p}^p \right) 
\\
&=& 2^{-2p} c^p \|\alpha\|_{\ell^p}^p - 2^p \delta^p c^p \left(\sum_{n = 1}^\infty 2^{-np}\right) \|\alpha\|_{\ell^p}^p 
\\
&\ge& (2^{-2p}-2\delta^p)c^p \|\alpha\|_{\ell^p}^p = 2^{-2p-1} c^p \|\alpha\|_{\ell^p}^p,
\end{eqnarray*}
when we choose $\delta > 0$ s.t.\ $2^{-2p}-2\delta^p = 2^{-2p-1}$, i.e.\ $\delta = 2^{-2-2/p}.$ Thus the mapping $U$ is bounded from below and by \eqref{eq: bddness} it is also bounded. Therefore we have established that $$\|U(\alpha)\|_p \simeq  \|\alpha\|_{\ell^p}$$ for all $\alpha \in \ell^p$ and consequently the mapping $U$ is an isomorphism onto its image.
\end{proof}

%
Now we are ready to prove our main result.

\begin{proof}[Proof of Theorem \ref{T_gstrictsing}]
By Theorem \ref{isomorphism} and Proposition \ref{bddmap}, we can choose a sequence $(b_n) \subset \D$ that  induces an isomorphism $$U\colon \ell^p \to H^p,\, U(\alpha)=\sum_{n = 1}^\infty \alpha_n T_g f_n$$ onto its image and 
a bounded operator $$V \colon \ell^p \to H^p, \, V(\alpha) = \sum_{n = 1}^\infty \alpha_n f_n,$$ where $f_n = f_{b_n}$ and $\alpha = (\alpha_n) \in \ell^p.$ 

Define $M = \overline{\textup{span}\{f_n\}},$ where the closure is taken in $H^p.$ 
It is enough to show that the restriction $$T_g|_M\colon M \to T_g(M)$$ is bounded from below and $M$ is isomorphic to $\ell^p$. Let $f \in M.$ Then $f = \sum_{n = 1}^\infty \alpha_n f_n$ for some $\alpha = (\alpha_n) \in \ell^p$ and it follows from the fact that $U$ is bounded from below and the boundedness of $V$ that
\begin{eqnarray*}
\|T_gf\|_p &=& \left\|\sum_{n = 1}^\infty \alpha_n T_gf_n \right\|_p = \|U(\alpha)\|_p \gtrsim \|\alpha\|_{\ell^p} \gtrsim \|V(\alpha)\|_p 
\\
&=& \|\sum_{n = 1}^\infty \alpha_n f_n\|_p = \|f\|_p.
\end{eqnarray*} 
Since the operator $T_g|_M$ is also bounded, it is an isomorphism. Moreover, it holds that $\ell^p$ is isomorphic to $ U(\ell^p) = T_g(M)$, which is isomorphic to $M.$ 
Consequently the operator $T_g$ fixes an isomorphic copy of $\ell^p,$ namely the closed subspace $M$. Hence the operator $T_g$ is not strictly singular. 
\end{proof}

\section{Some comments}


It follows from an idea of  Le{\u\i}bov \cite{leibov} that there exists isomorphic copies of the space $c_0$ of null sequences inside $VMOA$. Therefore the strict singularity of $T_g$ on $BMOA$ or on $VMOA$ is equivalent to the compactness of $T_g$ on the same space. The sketch of the proof is the following: 

First, we give a reformulation of  Le{\u\i}bov's result, which is taken from \cite{LNST}.
\begin{lemma}[{\cite[Proposition 6]{LNST}}]\label{lemma_leibov}
Let $(f_n)$ be a sequence in $VMOA$ such that $\|f_n|_* \simeq 1$
and $\|f_n|_2\to 0$ as $n\to \infty$. Then there is a
subsequence $(f_{n_j})$ which is equivalent to the natural basis of
$c_0$; that is, the map
$\iota\colon (\lambda_j) \to \sum_j \lambda_j f_{n_j}$ is an
isomorphism from $c_0$ into $VMOA$.
\end{lemma}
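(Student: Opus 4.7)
The plan is a gliding-hump construction in the $BMOA$ seminorm $\|\cdot\|_* = \sup_{a \in \D}\|\cdot \circ \sigma_a - \cdot(a)\|_2$. For each $n$, since $\|f_n\|_* \simeq 1$, pick $a_n \in \D$ with $\|f_n \circ \sigma_{a_n} - f_n(a_n)\|_2 \ge c$ for some absolute $c > 0$. I first claim that $|a_n| \to 1$ along a subsequence: if instead, along some subsequence, $a_n$ stays in a compact $K \subset \D$, then the change of variables $dm \circ \sigma_a^{-1} = \frac{1-|a|^2}{|1-\bar a\zeta|^2}\,dm$ on $\T$ yields $\|f_n \circ \sigma_{a_n}\|_2 \lesssim_K \|f_n\|_2 \to 0$, while convergence in $H^2$ norm forces local uniform convergence and hence $|f_n(a_n)| \to 0$, contradicting the lower bound $c$. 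Passing to a further subsequence, $a_n \to \omega \in \T$.

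Next I extract a subsequence $(g_j) := (f_{n_j})$ with $b_j := a_{n_j}$ satisfying, for some small fixed $\eta > 0$ to be determined: (i) $\|g_j \circ \sigma_{b_j} - g_j(b_j)\|_2 \ge c$; (ii) $\|g_j \circ \sigma_{b_k} - g_j(b_k)\|_2 < \eta\cdot 2^{-j-k}$ for $k < j$; and (iii) $\|g_k \circ \sigma_{b_j} - g_k(b_j)\|_2 < \eta\cdot 2^{-j-k}$ for $k < j$. Condition (i) is immediate from the first step. For (ii), with $b_k$ fixed in $\D$, both $\|f_n \circ \sigma_{b_k}\|_2$ and $|f_n(b_k)|$ tend to zero as $n \to \infty$ by the hypothesis $\|f_n\|_2 \to 0$, so we make $n_j$ large enough. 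For (iii), the already-chosen $g_k \in VMOA$ satisfies $\|g_k \circ \sigma_a - g_k(a)\|_2 \to 0$ as $|a| \to 1$, and by enlarging $n_j$ further we may assume $|b_j|$ is close enough to $1$.

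For the lower $c_0$-equivalence bound, fix $\lambda = (\lambda_j) \in c_0$ and any $k$. Testing the $BMOA$ seminorm at $a = b_k$ and isolating the $k$-th term,
\[
\Bigl\|\sum_j \lambda_j g_j\Bigr\|_* \;\ge\; c\,|\lambda_k| \;-\; \Bigl(\sup_j |\lambda_j|\Bigr)\,\eta \sum_{j \ne k} 2^{-j-k} \;\ge\; \tfrac{c}{2}|\lambda_k|
\]
once $\eta$ is chosen small enough. Taking the supremum over $k$ gives $\|\sum_j \lambda_j g_j\|_* \gtrsim \|\lambda\|_{c_0}$.

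The main obstacle is the matching upper bound $\|\sum_j \lambda_j g_j\|_* \lesssim \sup_j |\lambda_j|$, which must hold uniformly in \emph{all} $a \in \D$, not only at the preselected test points $b_k$. The triangle inequality only yields an $\ell^1$-type bound, so genuine almost-disjointness is needed. The natural route is the Carleson-measure characterization $\|h\|_*^2 \simeq \sup_I \frac{1}{|I|} \int_{S(I)} |h'(z)|^2(1-|z|^2)\,dA(z)$, where $S(I)$ is the Carleson box over an arc $I \subset \T$: by sharpening the inductive selection so that the measures $|g_j'(z)|^2(1-|z|^2)\,dA$ are supported, up to arbitrarily small mass, on pairwise disjoint Carleson boxes nested toward $\omega$, one obtains the desired uniform bound, and convergence of $\sum_j \lambda_j g_j$ in $VMOA$ then follows from the closedness of $VMOA$ in $BMOA$.
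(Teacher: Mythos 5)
First, a caveat about the comparison: the paper does not prove this lemma at all --- it is imported verbatim as \cite[Proposition 6]{LNST} (a reformulation of Le\u{\i}bov's theorem), so there is no in-paper proof to measure your argument against; I can only assess your proposal on its own terms.

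Your lower bound is essentially correct: the selection of test points $a_n$ with $|a_n|\to 1$, the gliding-hump conditions (i)--(iii), and the evaluation of the seminorm at $a=b_k$ all work (modulo the minor point that $\frac{c}{2}|\lambda_k|$ should be obtained after taking the supremum over $k$, not for each $k$ separately, and that $\|\cdot\|_*$ annihilates constants, so one must work with the actual $VMOA$ norm). The genuine gap is exactly where you flag it: the upper bound $\|\sum_j\lambda_j g_j\|_*\lesssim\sup_j|\lambda_j|$, which is the entire content of Le\u{\i}bov's theorem, is left as a one-sentence sketch. As written, the sketch does not close: you neither show that the Carleson measures $|g_j'(z)|^2(1-|z|^2)\,dA$ can be almost-supported on disjoint boxes nested toward $\omega$ (the hypotheses give smallness of total mass, $\mu_j(\D)\simeq\|g_j\|_2^2\to 0$, but concentration near a \emph{single} point $\omega$ is not automatic and is not needed for the lower bound you proved), nor that such disjointness would suffice --- after expanding $|\sum_j\lambda_j g_j'|^2$ one must control the cross terms and, more seriously, the sum over $j$ of $\mu_j(S(I))/|I|$ uniformly in the arc $I$, which requires a quantitative nesting you have not specified. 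A cleaner and more elementary route stays entirely inside the seminorm you already use: by the triangle inequality, for each fixed $a$,
$$\Bigl\|\sum_j\lambda_j\bigl(g_j\circ\sigma_a-g_j(a)\bigr)\Bigr\|_2\le\Bigl(\sup_j|\lambda_j|\Bigr)\sum_j\|g_j\circ\sigma_a-g_j(a)\|_2,$$
so it suffices to select the subsequence so that $\sup_{a\in\D}\sum_j\|g_j\circ\sigma_a-g_j(a)\|_2\lesssim 1$. This can be arranged by making the ``essential supports'' $\{a:\|g_j\circ\sigma_a-g_j(a)\|_2\ge 2^{-j}\}$ pairwise disjoint annuli: each such set is relatively compact in $\D$ because $g_j\in VMOA$, so at stage $j$ one first pushes the new annulus past the previous ones (using $\|g_k\circ\sigma_a-g_k(a)\|_2\to 0$ as $|a|\to 1$ for $k<j$) and then uses $\|f_n\|_2\to 0$ to make the new function uniformly small on the fixed compact region where the earlier annuli live. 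Until the upper estimate is carried out in one of these ways, the proof is incomplete.
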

 
For each arc $I \subset \T,$ we write $|I|$ to denote the length of $I$ and define Carleson windows $$S(I) = \{re^{it}: 1 - |I| \le r < 1, t \in I\}$$ and their corresponding base points $u = (1-|I|)e^{i\theta},$ where $\theta$ is the mid-point of $I$. We also consider ``logarithmic $BMOA$'' space $$LMOA = \left\{ g \in H(\D): \sup_{a \in \D}\lambda(a)\|g \circ \sigma_a - g(a)\|_2 < \infty\right\},$$ where $\lambda(a) = \log\left(\frac{2}{1-|a|}\right).$ The condition $g \in LMOA$ characterizes the boundedness of $T_g$ on $BMOA$ and simultaneously on $VMOA$, see \cite{SZ}.

We consider test functions $f_n(z) = \log(1-\bar{u_n}z),$ where $u_n \in \D$ is the base point of the Carleson window $S(I_n)$ and $(I_n)$ is a sequence of arcs of $\T$ s.t\ $I_n \to 0.$ Define $h_n = f_{n+1}-f_n.$ By the proof of Theorem $2$ in \cite{LMN}, it holds that $\|h_n\|_* \simeq 1$ and $\|h_n\|_2 \to 0,$ as $n \to \infty.$ By Lemma \ref{lemma_leibov}, we can pick a subsequence $(h_{n_k}) \subset (h_n)$ which is equivalent to the standard basis $\{e_k\}$ of $c_0.$ If $T_g$ is non-compact on $VMOA,$ by passing to a subsequence if necessary, we can assume that $\|T_gh_{n_k}\|_* > c > 0$ for some constant $c$ for all $k.$ Since $g \in LMOA \subset BMOA,$ the operator $T_g$ is bounded on $H^2$ and consequently $\|T_gh_{n_k}\|_2 \to 0,$ as $k \to \infty.$ Now we apply Lemma \ref{lemma_leibov} again to obtain (by passing to a subsequence, if needed) that $\{T_gh_{n_k}\}$ is equivalent to the natural basis of $c_0.$ Hence $T_g|M,$ where $M = \overline{\textup{span}\{h_{n_k}\}},$ is an isomorphism onto its image and $T_g$ is not strictly singular on $VMOA$ (or on $BMOA$).

\vspace{12pt}

\textbf{Remark.} In Bergman spaces $A^p, \, 1 \le p < \infty$, which are isomorphic to $\ell^p$, see e.g.\ \cite[Chapter 2.A, Theorem 11]{W}, the strict singularity of the operator $T_g$ coincides with the compactness, since all strictly singular operators on $\ell^p$ are compact. 



\subsection*{Acknowledgements}

I would like to thank my advisor Pekka Nieminen, Hans-Olav Tylli and Eero Saksman for useful discussions regarding this topic.   

\bibliographystyle{plain}
\bibliography{StrictsingT_g}

\end{document}